\newtheorem{theorem}{Theorem}
\newtheorem{lemma}[theorem]{Lemma}
\newtheorem{prop}[theorem]{Proposition}
\newtheorem{corr}[theorem]{Corollary}
\newtheorem{remark}[theorem]{Remark}
\newtheorem{deff}[theorem]{Definition}
\newtheorem{example}[theorem]{Example}
\title{Graph homology and graph configuration spaces}
\author{Vladimir Baranovsky, Radmila Sazdanovi\'c}
\date{}
\begin{document}
\maketitle

\begin{abstract}
If $R$ is a commutative ring, $M$ a compact $R$-oriented manifold
and $G$ a finite graph without loops or multiple edges,
we consider the graph configuration space $M^G$ and a
Bendersky-Gitler type spectral sequence converging to the
homology $H_*(M^G, R)$. We show that its
$E_1$ term is given
by the graph cohomology complex
$C_A(G)$
of the graded commutative
algebra $A = H^*(M, R)$ and its higher differentials are obtained
from the Massey products of $A$, as conjectured by Bendersky and
Gitler for the case of a complete graph $G$. Similar results apply
to the spectral sequence constructed from an arbitrary finite graph $G$
and a graded commutative DG algebra $\mathcal{A}$.
\end{abstract}

\section{Introduction}

Let $A$ be a graded algebra over a commutative Noetherian ring $R$ of
finite $Ext$ dimension. We assume that $A$ is a projective $R$-module.

For any finite graph $G$,
we will define the graph cohomology complex $C_A(G)$,
inspired by the construction of L.~Helme-Guizon and Y.~Rong, see for instance \cite{HR}
or Section $2$ of \cite{HPR}.
 To that end, let $E(G)$ and $V(G)$
be the sets of edges and vertices, respectively, and choose a bijection  of
$V(G)$ with $\{1, \ldots, n\}$, i.e. an enumeration of the vertices. This gives
an orientation for any edge $\alpha \in E(G)$: if $\alpha$  connects vertices
$i$ and $j$ with $i \leq j$ we write $\alpha: i \to j$.
For any subset $s \subset E(G)$ let $l(s)$ be the
number of connected components in the subgraph $[G:s]$ which has the same
set of vertices as $G$ but the edges in $s$ only.

Denote by $\Lambda = \Lambda(e_\alpha)$ the exterior algebra over $R$
on the generators $e_\alpha$, $\alpha \in E(G)$.
For $s \subset E(G)$ set
$e_s$ to be the exterior product of all $e_\alpha$, $\alpha \in s$, ordered with respect to
the lexicographic ordering on the pairs $(i,j)$ coming from the
edges $\alpha: i \to j$. Similarly, the connected components of
$[G:s]$ are naturally ordered by the smallest vertex contained in
a component.

Now define the bigraded complex $C_A(G)$ to be the quotient algebra of
$\Lambda \otimes_R A^{\otimes n}$ by the relations
$$
  e_{\alpha}  \otimes (a[i] - a[j]), \qquad a \in A, E(G) \ni \alpha: i \to j;
$$
where we denote by $a[i]$ the element $1^{\otimes (i-1)} \otimes a \otimes 1^{\otimes (n-i)} \in A^{\otimes n}$
for $i \in \{1, \ldots, n\}$ .

The complex $C_A(G)$ has a natural bigrading in which each $e_\alpha$ has bidegree
$(0, 1)$, and $a_1 \otimes \ldots \otimes a_n \otimes 1$
has bidegree $(\displaystyle{\sum^n_{i=1} \deg_A(a_i)}, 0)$.
The differential $\delta$ on $C_A(G)$ of bidegree $(0, 1)$ is given by the
wedge product with $\displaystyle{\sum_{\alpha \in E(G)} e_{\alpha}}$, see
page 428 of \cite{BG}.

\bigskip
\noindent
Alternatively, we can define $C_A(G)$ in terms of subgraphs in $E(G)$ as
the complex of projective $R$-modules
$$
C_A(G) := \bigoplus_{s \subset E(G)} e_s \cdot A^{\otimes l(s)} ,
$$
where $e_s \cdot a_1 \otimes \ldots \otimes a_{l(s)} $ has
bidegree $(\displaystyle{\sum^{l(s)}_{i=1} \deg_A a_i}, |s|)$, with
 $\delta$  acting as follows
\begin{eqnarray*}
\delta(e_s \cdot a_1 \otimes \ldots \otimes a_{l(s)})=
 \sum_{\begin{smallmatrix}
\alpha \in E(G)\\
l(s\cup \alpha) = l(s)
\end{smallmatrix}}  e_\alpha e_s \cdot a_1 \otimes \ldots \otimes a_{l(s)}  +  \\
 \sum_{\begin{smallmatrix}
\alpha \in E(G)\\
l(s\cup \alpha) = l(s)-1
\end{smallmatrix}}
 (-1)^{\tau} e_\alpha e_s \cdot a_1 \otimes \ldots  \otimes a_{t(\alpha)} a_{h(\alpha)}
\otimes \ldots \otimes a_{l(s)}
\end{eqnarray*}
where $s \cup \alpha$ is the subset obtained by adding $\alpha: i \to j$ to $s$
(we can assume that $\alpha \notin s$ as otherwise $e_s e_\alpha = 0$),
and $t(\alpha)$ and $h(\alpha)$ are the numbers of the connected components
in the subgraph $[G:s]$ containing $i$ and $j$, respectively.
The first sum corresponds to the case $h(\alpha) = t(\alpha)$ and the second to
$h(\alpha) \neq t(\alpha)$.
Note that $i$ and $j$ may not be the smallest vertices in their connected components
and one can have $h(\alpha) < t(\alpha)$ or
$t(\alpha) < h(\alpha)$. Depending on that, the product
 $a_{t(\alpha)} a_{h(\alpha)}$ has
either $(t(\alpha) - 2)$ or $(t(\alpha) - 1)$ terms to the left of it.
The sign $(-1)^\tau$ in the second group of terms is
the Koszul sign of the permutation of $a_1, \ldots, a_{l(s)}$ which moves
$a_{h(\alpha)}$
to the immediate right of $a_{t(\alpha)}$, and preserves the order of other
elements.

\bigskip
\noindent
Now let $M$ be a simplicial complex.
For any $\alpha: i \to j$
let $Z_\alpha$ be the diagonal in the cartesian product $M^{n}$ defined by
$m_i = m_j$,
and set
$$
Z_G = \bigcup_{\alpha \in E(G)} Z_\alpha, \qquad M^G = M^{n}
\setminus Z_G.
$$
We will call $M^G$ the \textit{graph configuration space} of $M$. In the case
when $G$  is the complete graph on $n$ vertices we get the classical
configuration spaces of ordered $n$-tuples of pairwise distinct points in $M$.

\begin{theorem} Assume that the cohomology algebra
$A = H^*(M, R)$ is
a projective $R$-module and that $G$ has no loops or multiple edges.
There exists a spectral sequence with $E_1$ term isomorphic
to $C_G(A)$ which converges to the relative cohomology
$H^*(M^{\times n}, Z_G; R)$.
\end{theorem}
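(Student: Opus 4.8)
The plan is to realize the desired spectral sequence as the one attached to a geometric (Vassiliev-type) simplicial resolution of the union of diagonals $Z_G$, enlarged so as to compute the relative cohomology of the pair $(M^{\times n}, Z_G)$.

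First I would record the combinatorics of intersections. For a nonempty $s \subseteq E(G)$ set $Z_s = \bigcap_{\alpha \in s} Z_\alpha$; the equations $m_i = m_j$ coming from the edges $\alpha: i \to j$ in $s$ generate the equivalence relation on $\{1,\dots,n\}$ whose classes are the connected components of $[G:s]$, so $Z_s$ is canonically a copy of $M^{\times l(s)}$, the surviving coordinates being indexed by the components ordered by their least vertex, exactly as in the setup above. I fix the total order on $E(G)$ induced by the enumeration of $V(G)$ together with the lexicographic order on the pairs $(i,j)$, and use it to form the semisimplicial space $Z_\bullet$ whose space of $p$-simplices is $\coprod_{|s|=p+1} Z_s$, with coface maps the inclusions $Z_s \hookrightarrow Z_{s\setminus\alpha}$. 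The augmentation $|Z_\bullet| \to Z_G$ has contractible fibres — the fibre over $z$ is the simplex on $\{\alpha : z \in Z_\alpha\}$ — hence is a homotopy equivalence, using that the $Z_\alpha$ are subcomplexes of a suitable triangulation so the relevant inclusions are cofibrations. Adjoining $M^{\times n} = Z_\emptyset$ and passing to the mapping cone of $|Z_\bullet|\xrightarrow{\sim} Z_G \hookrightarrow M^{\times n}$ (equivalently, taking the total complex of the map of cochain complexes $C^*(M^{\times n}) \to \mathrm{Tot}\, C^*(Z_\bullet)$), I obtain a filtered cochain complex computing $H^*(M^{\times n}, Z_G; R)$, filtered by the simplicial degree $p = |s|$.

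The associated spectral sequence has $E_1^{p,q} = \bigoplus_{|s|=p} H^q(Z_s)$, which by the Künneth theorem — with no $\mathrm{Tor}$ terms, since $A = H^*(M,R)$ is $R$-projective, hence $R$-flat — equals $\bigoplus_{|s|=p} (A^{\otimes l(s)})^q$; for $p=0$ this is $(A^{\otimes n})^q = H^q(M^{\times n})$, so altogether $E_1 \cong \bigoplus_{s\subseteq E(G)} e_s\cdot A^{\otimes l(s)} = C_A(G)$ as bigraded $R$-modules. It then remains to identify $d_1$ with $\delta$. The differential $d_1$ is the alternating sum, over edges $\alpha\notin s$, of the restriction $H^*(Z_s)\to H^*(Z_{s\cup\alpha})$ along $Z_{s\cup\alpha}\hookrightarrow Z_s$. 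If $l(s\cup\alpha)=l(s)$ then $Z_{s\cup\alpha}=Z_s$ and this restriction is the identity, giving the first sum in the formula for $\delta$; if $l(s\cup\alpha)=l(s)-1$ then $Z_{s\cup\alpha}\hookrightarrow Z_s$ is the partial diagonal identifying the coordinates labelled $t(\alpha)$ and $h(\alpha)$, and its pullback on cohomology is the partial cup product $a_1\otimes\cdots\otimes a_{l(s)}\mapsto a_1\otimes\cdots\otimes a_{t(\alpha)}a_{h(\alpha)}\otimes\cdots$, giving the second sum. The coface signs of $Z_\bullet$, read through the chosen order on $E(G)$, are precisely the signs of $e_\alpha\wedge e_s$ and the Koszul signs $(-1)^\tau$, so $d_1 = \delta$ on the nose. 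Finally, since $E(G)$ is finite the spectral sequence is concentrated in columns $0\le p\le |E(G)|$, hence bounded and convergent, which gives the statement (reading $C_G(A)$ as $C_A(G)$).

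The hard part will be the bookkeeping of signs and orientations in the last step: matching the conventions built into the exterior generators $e_s$ and the Koszul signs of $A^{\otimes n}$ with the coface signs of the semisimplicial resolution. A secondary technical point is making sure the resolution and its cone genuinely compute the relative cohomology of the pair, i.e. that the inclusions $Z_\alpha\hookrightarrow M^{\times n}$ and their intersections are cofibrations; one arranges this by choosing a triangulation of $M^{\times n}$ for which every $Z_\alpha$ is a subcomplex (after barycentric subdivision), or by replacing the $Z_\alpha$ with regular neighbourhoods.
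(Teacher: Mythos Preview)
Your proposal is correct and follows essentially the same route as the paper: both build the spectral sequence from the simplicial (\v{C}ech/Mayer--Vietoris) bicomplex on the intersections $Z_s=\bigcap_{\alpha\in s}Z_\alpha\cong M^{l(s)}$, apply K\"unneth (using projectivity of $A$) to identify $E_1$, and then check $d_1=\delta$ by splitting into the two cases $l(s\cup\alpha)=l(s)$ and $l(s\cup\alpha)=l(s)-1$. The only difference is packaging: the paper simply cites Bendersky--Gitler for the fact that the bicomplex computes $H^*(M^{\times n},Z_G;R)$, whereas you spell out the geometric-realization/contractible-fibre argument and flag the cofibration and sign issues explicitly.
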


\begin{remark}
This theorem resolves the conjecture of M. Khovanov, that there exists a spectral sequence from chromatic graph cohomology defined by L. Helme-Guizon and Y. Rong \cite{HR} to  Eastwood-Hugget graph homology \cite{EH}.
A standard consequence of the theorem is equality of the Poincare polynomials
(with respect to the total grading)
for $C_A(G)$ and $H^*(M^{\times n}, Z_G; R)$.
\end{remark}
\begin{remark}
For a general graph $G$ and an edge $\alpha \in E(G)$
one can use the deletion-contraction sequence
$$
0 \to C_A(G/\alpha) \to C_A(G) \to C_A(G \setminus \alpha) \to 0
$$
to compute the graph cohomology. There $G/\alpha$ is the
graph obtained by contracting $\alpha$ to a single vertex,
and $G \setminus \alpha$ is obtained by removing $\alpha$.
Then it is easy to see that the graph cohomology is zero when $G$ has
a loop, and it does not change if multiple edges $i \to j$ get
replaced by a single edge.
\end{remark}
\begin{remark}
   When $M$ is
a compact $R$-orientable manifold of dimension $m$, the
relative cohomology groups $H^*(M^{\times n}, Z_G; R)$
are isomorphic to the homology groups $H_{mn-*} (M^G; R)$ by
Lefschetz duality. Observe, however, that for existence of the spectral
sequence we still have to assume that $H^*(M, R)$ is projective over
$R$ (one of the reasons is that we use the Kunneth formula for cohomology).
In general the cohomology algebra $A$ needs to be replaced
by an appropriate projective DG-algebra $\mathcal{A}$ resolving it,
and $C_A(G)$ by $C_{\mathcal{A}}(G)$, as in Section 2.2 below.
\end{remark}

\bigskip
\noindent
In Section 4 we study the higher differentials of this spectral sequence
and show that they are determined by the matrix
 Massey products of $A$,
as conjectured by Bendersky and Gitler in \cite{BG}. Our main results
here are Proposition \ref{massey}, explaining how the $A_\infty$-algebra
structure on $A$ and an application of perturbation theory to
spectral sequences (as recalled in Proposition \ref{standard}) allow us
 to compute the spectral sequence differentials;
and Proposition \ref{degeneration} which says that the spectral sequence
degenerates starting with the page $E_m$,
 where $m$ is the number of vertices in the
largest subtree of $G$. Also, a standard argument shows that in some cases,
e.g. when $M$ is a compact K\"ahler manifold, the spectral sequence
degenerates in the $E_2$ term.

\bigskip
\noindent
\textbf{Acknowledgements.} The first author wants to thank HSE in Moscow,
Russia and USTC in Hefei, China where he stayed while working on this paper,
for their excellent research conditions. The second author wants to thank
the organizers of the Special Program ``Homology theories for knots and links"
for the opportunity to do research at MSRI where this collaboration started, and
 NSF 0935165 and AFOSR FA9550-09-1-0643 grants for support towards the end of the project.

\section{Spectral sequences}

\subsection{Proof of Theorem 1}

For any simplicial topological space $X$,
we denote by $C^*(X; R)$ its
cochain complex.  Suppose that $Z \subset X$ is a subspace which
is a union of closed subspaces $Z_{\alpha}$, $\alpha \in E$, where
$E$ is a finite ordered set.
For a finite subset $s \subset E$ let
$$ Z_s = \displaystyle{\bigcup_{\alpha \in s} Z_{\alpha}}$$
and denote $Z_\emptyset = X$ for notational convenience.
By pages $425$ and $427$ of \cite{BG}, the relative cohomology $H^*(X, Z; R)$
can be computed as the total cohomology of a bicomplex
\begin{equation}
\label{cochains}
C^*(Z_\emptyset, R) \to \bigoplus_{\alpha \in E} C^*(Z_\alpha; R)
\to \bigoplus_{s \subset E; |s|=2} C^*(Z_s; R) \to \ldots
\end{equation}
where the differential comes from the obvious simplicial structure
on the collection of subsets in $E$.
Applying one of the two standard spectral sequences of a bicomplex
we obtain a spectral sequence converging  to $H^*(X, Z; R)$ with
$$ E_1^{pq} = \bigoplus_{s \subset E; |s| = p}  H^q(Z_s; R),$$
and the differential $\partial_1: E_1^{p, q} \to E_1^{p+1, q}$
is the usual simplicial differential constructed from the
pullbacks with respect to the closed embeddings
$$
Z_t = Z_s \cap Z_\alpha \subset Z_s; \qquad t = s \cup \{\alpha\},
\alpha \notin s.
$$
Next, we specialize to the case when $X = M^n$, $G$ has no
loops or multiple edges and
$Z = \displaystyle{\bigcup_{\alpha \in E(G)} Z_e}$ comes from the set $E = E(G)$
of edges in $G$. For a general subset $s \subset E$ the space
$Z_s$ can be identified with $M^{l(s)}$. By projectivity
 (and hence flatness) of $A$, the Kunneth formula applies to give
$$
H^*(Z_s, R) = A^{\otimes l(s)}
$$
where the tensor product is taken over $R$.

To compute the differentials explicitly, consider two cases.
Firstly, the unique element $\alpha \in t \setminus s$ may connect two
vertices within the same connected component of $[G:s]$. In this case
the embedding $Z_t \subset Z_s$ is an isomorphism and hence
induces the identity map on cohomology.

Secondly, $\alpha$ may connect two of the $l = l(s)$ connected
components of the graph $[G:s]$. To simplify notation assume that
these are the components corresponding to the first two factors of
$A^{\otimes l}$. Then the embedding $Z_t \to Z_s$ is the product of
the diagonal map $M \to M \times M$ and the identity on
the other $M^{l-2}$ factors. But the pullback with respect to the
diagonal map induces on cohomology
precisely the cup product $A \otimes A \to A$. Hence
$Z_t \subset Z_s$in this case induces the map
$$
A^{\otimes l} \to A^{\otimes (l-1)}, \qquad
a_1 \otimes a_2 \otimes a_2 \otimes \ldots \otimes a_l
\mapsto a_1 a_2 \otimes a_3 \otimes \ldots \otimes a_l.
$$
This finishes the proof of the theorem. $\square$

\bigskip
\noindent
\begin{remark}
   One can give a slight generalization when there
is a continuous map $f: N \to M$
which makes $B= H^*(N, R)$ into a module over $A=H^*(M, R)$.
In this case we assume that the set of vertices in the
graph $G$ is $I =\{0, \ldots, n\}$ and define the generalized
configuration space $M^{G, f}$ as the open
complement $(N \times M^{\times n}) \setminus Z_G$.
When the edge $\alpha$ connects two nonzero vertices the subset
$Z_\alpha$ is still defined by the condition $m_i = m_j$
while for an edge connecting $i=0$ and $j$ we use the
condition $f(n) = m_j$. As with $A$, we need to assume
that $B$ is projective over $R$.

Since the graph embedding $(Id_N, f): N \to N \times M$
induces the module action map $B \otimes_R A \to
B$ on cohomology, we get the spectral sequence
with $E_1$ given by the graph cohomology of the pair
$(A, B)$ (the complex is constructed similarly, but the $A$-module $B$
is placed at the zero vertex), converging to the
relative cohomology $H^*(N \times M^n, Z_G; R)$.
When $M, N$ are compact $R$-orientable manifolds and $f$
is a smooth map, this
is isomorphic to
the homology of the generalized configuration space
$M^{G, f}$.
\end{remark}

\subsection{Graph cohomology of DG algebras.}

Let $\mathcal{A}$ be a commutative  DG algebra. Then the complex $C_{\mathcal{A}}(G)$
has another differential $d$ of bidegree $(1, 0)$ induced by the
differential of $\mathcal{A}$, and it is easy to check that $d \delta + \delta d =0,
d^2=0$. Therefore we can consider the total differential $D= d + \delta$.

Now assume that $\mathcal{A}$ is a commutative DG algebra such that
the bicomplex $C_{\mathcal{A}}(G)$ may be connected with
the graph configuration space
bicomplex (\ref{cochains}) by a sequence of first quadrant
bicomplex quasi-isomorphisms, which restrict to quasi-isomorphisms along
the columns. Then the two bicomplexes have isomorphic spectral
sequences (starting with the $E_1$ term) associated with the vertical filtration,
as follows from the standard definitions, e.g. in \cite{GM}.

\begin{example}
   We can take $\mathcal{A} = H^*(M; R)$
with the zero differential if the space $M$ is $R$-formal, i.e.
if $H^*(M; R)$ and the cochain algebra $C^*(M; R)$ may be
connected with $H^*(M; R)$ by a chain of DG algebra
quasi-isomorphisms. When $R$ is the field $\mathbb{Q}$
of rational numbers, we can take the complex of Sullivan
cochains, and for the field $\mathbb{R}$ of real numbers
we can take the De Rham complex of differential forms.
Finally by a result of \cite{A}, for $R$ a field of
finite characteristic $p$,
 the DG algebra $\mathcal{A}$ exists if $M$ is $r$-connected
and $pr > \dim M.$
\end{example}

\begin{remark}
  When the algebra $A$ is not flat over $R$,   a better version
of its graph homology is obtained by taking a flat $DG$-resolution
$\mathcal{A} \to A$ and then computing the cohomology of $C_{\mathcal{A}}(G)$
with respect to the total differential.
\end{remark}

\begin{remark}
 Ideally, one should be able to work with the graph homology of
$C^*(M; R)$ itself. Although this DG algebra is definitely not graded
commutative, its deviation from commutativity can be measured by using the
$\cup_i$-operations of Steenrod, which were generalized by McClure and Smith
in \cite{MS} to certain \textit{``sequence operations"} on $C^*(M; R)$. We expect
that these operations (i.e. the $E_\infty$ algebra structure on
$C^*(M; R)$) can be used to adjust the total differential
$D = d + \delta$ by adding a term $d_2 + d_3 + \ldots$, with
$d_i$ of bidegree $(-i+1, i)$, so that
the total sum satisfies $D^2 = 0$. For instance, if $G$ has edges $(\alpha, \beta)$
with $t(\alpha) = t(\beta) = 1$, $h(\alpha) =2$, $h(\beta) = 3$ and $n=3$,
one can set $d_2(a_1 \otimes a_2 \otimes a_3) = a_1 (a_3 \cup_1 a_2) e_\alpha e_\beta$
to achieve $D^2=0$. For $n\geq 4$ one probably needs the sequence operations of
\cite{MS}. The resulting complex should compute the relative homology
$H^*(M^{\times n}, Z_G; R)$ for any pair $(M, R)$. We plan to return to this
topic in a future work.
\end{remark}

\section{Perturbation Lemma and applications.}

The material of this section is fairly standard, we collect it here for the reader's convenience
and also to fix the notation.

\subsection{Basic Perturbation Lemma}

\begin{deff}
  Let  $K, L$  be a pair of complexes with differentials
$d_K$, $d_L$ respectively.  Consider morphisms of complexes $f: K \to L$,
$g: L \to K$ such that $fg$ is equal to the identity $1_L$ on $L$ and
$1_K - gf = d_K h + h d_K$  for some homotopy $h$. The triple $(f, g, h)$ is
called a \textit{reduction} (or a \textit{strong deformation retract})
if in addition the following \textit{side conditions} are satisfied
\begin{equation}\label{side}
  hg = fh =  hh = 0.
\end{equation}
\end{deff}
\noindent
It is well known, cf. \cite{JL}, that  conditions (\ref{side})  can be ensured by
adjusting an arbitrary homotopy $h$: first replacing it by $h'= (dh + hd) h (dh + hd)$
which satisfies $h'g=fh'=0$, and then further setting $h''= h' d h'$ which
will imply all three side conditions.

\begin{remark}
   Suppose that $K$ is a complex of projective
$R$-modules, such that $L = H^*(K)$ is also projective, and that
the commutative ring $R$ has finite $Ext$ dimension. Denoting
by $B^n$, resp. $Z^n$ the coboundaries, resp. the cocycles, of $K$,
we have the standard exact sequences:
$$
0 \to B^n \to Z^n \to L^n \to 0; \qquad 0 \to Z^n \to K^n \to B^{n+1} \to 0.
$$
Since we assumed $L^n$ and $K^n$ to be projective, this gives
for any $R$-module $M$ the isomorphisms for $i \geq 1$ and all $n$:
$$
Ext^i_R(Z^n, M) \simeq Ext^i_R(B^n, M); \qquad
Ext^i_R(Z^n, M) = Ext^{i+1}_R(B^{n+1}, M).
$$
Since we assumed $R$ to be of finite $Ext$-dimension, by induction
on $k$ we get  that $Ext^i_R(B^n, M) = Ext^{i+k}_R(B^{n+k}, M)$ which must
be zero if $k$ is large enough. Therefore each $B^n$ is also projective and
we can choose splittings
$$
K^n \simeq B^{n+1} \oplus Z^n \simeq B^{n+1} \oplus B^n \oplus L^n
$$
such that the differential $K^n \to K^{n+1}$ is the composition of the
projection $K^n \to B^{n+1}$ and the embedding $B^{n+1} \to K^{n+1}$.
Then a reduction $(f, g, h)$ may be defined as follows: $f, g$ are the obvious
projection and embedding and $h$ is the ``inverse" composition
$K^{n+1} \to B^{n+1} \to K^n$.
\end{remark}

\noindent
Now suppose we have a perturbation $\widehat{d}_K =
d_K + \delta_K$
of the differential $d_K$
such that $\delta_K^2 = 0, d_K \delta_K + \delta_K d_K = 0$.
We assume in addition that
the composition $\delta_K H$ is \textit{locally nilpotent}, i.e. on any particular
$x \in K$ we have $(\delta_K h)^n x = 0$ where the positive integer $n$ may
depend on $x$.

The following result is known as the Basic Perturbation Lemma, see
\cite{JL} and references in that paper.

\begin{lemma}\label{BPL}
Under the above assumptions, there exist: a perturbation of the
differential $\widehat{d}_L
= d_L + \delta_L$ on $L$, morphisms of complexes $\widehat{f}: K \to L$,
$\widehat{g}: L \to K$ and a homotopy $\widehat{h}$ (with respect to
the perturbed differentials on $K, L$), given by the formulas
$$
\delta_L = fXg,\; \widehat{f} = f (1 - Xh), \; \widehat{g} = (1 - hX) g, \;
\widehat{h} = h - hXh;
$$
where
$$
X = \delta_K - \delta_K h \delta_K + (\delta_K h) ^2 \delta_K -
(\delta_K h)^3 \delta_K + \ldots\hspace{2cm} \square
$$

\end{lemma}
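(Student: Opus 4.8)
The plan is to verify the Basic Perturbation Lemma formulas directly, checking each of the required identities in turn, while keeping track of where local nilpotence of $\delta_K h$ is used to make sense of the infinite series $X$.

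First I would record the algebraic setup: we have the reduction $(f,g,h)$ satisfying $fg = 1_L$, $1_K - gf = d_K h + h d_K$, and the side conditions $hg = fh = hh = 0$. I would also note the consequences obtained by applying $f$, $g$, $d_K$ to the homotopy identity: for instance $f d_K h + f h d_K = f - fg f = f - f = 0$, and since $fh=0$ this gives $f d_K h = 0$; symmetrically $h d_K g = 0$; and from $d_K(1_K - gf) = d_K$, together with $d_K^2=0$, one extracts $d_K h d_K = d_K - d_K g f d_K$ after using that $g,f$ are chain maps. These auxiliary identities are the workhorses for everything that follows. The key formal object is $X = \sum_{k \ge 0} (-1)^k (\delta_K h)^k \delta_K$, which by local nilpotence is a well-defined operator (on each element only finitely many terms survive), and which satisfies the fixed-point relation $X = \delta_K - \delta_K h X$, equivalently $(1 + \delta_K h) X = \delta_K$, and also $X = \delta_K - X h \delta_K$.

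Next I would check, in order: (1) $\widehat{d}_L = d_L + \delta_L$ is a differential, i.e. $\widehat{d}_L^2 = 0$; (2) $\widehat{f}$ and $\widehat{g}$ are chain maps from $(K,\widehat{d}_K)$ to $(L,\widehat{d}_L)$ and back; (3) $\widehat{f}\widehat{g} = 1_L$; (4) $1_K - \widehat{g}\widehat{f} = \widehat{d}_K \widehat{h} + \widehat{h}\widehat{d}_K$; and (5) the side conditions $\widehat{h}\widehat{g} = \widehat{f}\widehat{h} = \widehat{h}\widehat{h} = 0$. Each of these is a finite manipulation once one substitutes the series for $X$ and repeatedly uses the fixed-point relation together with the auxiliary identities and the side conditions; for example $\widehat{f}\widehat{g} = f(1 - Xh)(1 - hX)g = fg - fXh \cdot hX g$ and the middle cross terms vanish because $fXh = 0$ (this itself follows from $fh = 0$ and the structure of $X$ after one more use of $f d_K h = 0$ and $\delta_K$ being a perturbation), while $h h = 0$ kills $Xh \cdot hX$. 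Points (3) and (5) are largely bookkeeping with the side conditions; points (1), (2), (4) are where the perturbation hypotheses $\delta_K^2 = 0$ and $d_K\delta_K + \delta_K d_K = 0$ enter.

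The main obstacle I expect is point (4), the perturbed homotopy identity, together with making the convergence argument fully rigorous: one must show that the telescoping of $\widehat{d}_K\widehat{h} + \widehat{h}\widehat{d}_K = (d_K + \delta_K)(h - hXh) + (h - hXh)(d_K + \delta_K)$ collapses to $1_K - (1 - hX)g f(1 - Xh)$, and this requires carefully commuting $d_K$ past the infinite series $X$ using $d_K \delta_K = -\delta_K d_K$ term by term — legitimate because on any fixed element the series is finite, but it needs to be said. A secondary subtlety is that $\widehat{d}_K = d_K + \delta_K$ is only known to be a differential under the stated hypotheses, and one should confirm $\widehat{d}_K^2 = d_K^2 + d_K\delta_K + \delta_K d_K + \delta_K^2 = 0$ at the outset. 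Once the telescoping is organized — grouping terms as $(1 + \delta_K h)X = \delta_K$ acting on the left and $X(1 + h\delta_K) = \delta_K$ on the right — the identity falls out, and the remaining verifications are routine. $\square$
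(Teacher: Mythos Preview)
The paper does not actually prove this lemma: the statement ends with a $\square$, and the paragraph preceding it refers the reader to \cite{JL} and the references therein. So there is no proof in the paper to compare your attempt against; the lemma is quoted as a known result.

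Your plan of direct verification is the standard route and would succeed. One detail in your sketch is wrong, however: the identity $fXh = 0$ is false in general. Its first term is $f\delta_K h$, and nothing in the hypotheses forces this to vanish (you correctly derived $f d_K h = 0$, but $\delta_K$ is an arbitrary perturbation, not $d_K$). Indeed, if $fXh = 0$ held then $\widehat{f} = f(1 - Xh) = f$, which is not the case in nontrivial examples. Fortunately you do not need this claim: expanding
\[
\widehat{f}\widehat{g} = f(1 - Xh)(1 - hX)g = fg - fX(hg) - (fh)Xg + fX(hh)Xg
\]
and invoking the side conditions $hg = 0$, $fh = 0$, $hh = 0$ directly gives $\widehat{f}\widehat{g} = fg = 1_L$ with no further work. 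The same pattern disposes of the perturbed side conditions in your item~(5). Your assessment that the substantive content lies in item~(4), and that the fixed-point relations $X = \delta_K - \delta_K h X = \delta_K - X h \delta_K$ together with local nilpotence (to justify termwise manipulation) are the key tools there, is correct.
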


\subsection{Perturbations and Spectral Sequences}

Now we apply the previous result to give a very concrete realization of
the spectral sequence of a bicomplex of modules over
a ring, in the case when a splitting
homotopy is chosen for one of the differentials (say the vertical).
Although it is not easy to find an exposition of this approach in the
published literature (see \cite{RRS}, for instance) it is fairly
old and known to the experts in the field.

Consider a bicomplex with a
vertical $d: A^{p, q} \to A^{p, q+1}$ and a horizontal
differential $\delta: A^{p, q} \to A^{p+1, q}$.  We want to
identify the higher differentials of the standard spectral sequence
with $E_1^{p, q} = H_d (A^{p, q})$ converging to the
cohomology of the total complex $(K, d + \delta)$.

To that end,  let $L$ be the total complex of the $E_1$ term  and
assume there is a reduction of $A^{p, \cdot}$ to $ H_d(A^{p, \cdot})$
along each column of the original bicomplex.
This induces a reduction $(f, g, h)$ of the total complex $(K, d)$
onto $(L, 0)$.

Trying to compute the cohomology of $(K, d + \delta),$
we can treat $d + \delta$ as a perturbation of $d$ and apply
the Basic Perturbation Lemma \ref{BPL}. Observe that the local nilpotence
condition on $\delta h$ holds, for example, when $A^{p, q}$
is concentrated in the first quadrant (since $\delta$ moves an
element to the right, and $h$ moves it down).

By the Basic Perturbation Lemma, the complexes $(K, d + \delta)$
and $(L, f Xg)$ are homotopic; hence instead we can compute
the cohomology of $L$ with respect to
$$
\widehat{d}_L = d_1 + d_2 + d_3 + d_4 + \ldots
$$
where
\begin{equation}
\label{dee}
d_i = (-1)^{i-1} f (\delta h)^{i-1} \delta g.
\end{equation}
Each $d_i$ is an operator $E_1^{p, q} \to E_1^{p+i, q + 1 - i}$.

Since the homotopy $h$ preserves the filtration on $K$
the spectral sequences of  filtered complexes $K$ and $L$ agree,
see e.g. Theorem 15 in \cite{RRS}.
Writing out the standard definitions for the spectral sequence of
the filtered complex $(L, fXg)$ we get the following result.

\begin{prop}
\label{standard}
For every $i \geq 2$ an element
of $E^{p, q}_i$ is represented by $x \in L^{p, q}$ such that the
following system of equations on $x_2, \ldots, x_{i-1}$ admits a
solution
$$
d_1 (x) = 0; \qquad d_2(x) + d_1(x_2) = 0;
\qquad d_3(x) + d_2(x_2) + d_1(x_3) = 0; \ldots
$$
\begin{equation}
\label{system}
d_{i-1}(x) + d_{i-2}(x_2) + \ldots  + d_1(x_{i-1}) = 0,
\end{equation}
modulo the elements of the form $x = d_{i-1}(b_2) + \ldots
+ d_2 (b_{i-1})
+ d_{1}(b_i)$ where $b_{i}$ is arbitrary, and
$(b_2, \ldots, b_{i-2})$ satisfy a system of equations, obtained
from (\ref{system})
by setting $x=0$ and replacing $x_j$ by $b_j$.
The value of the differential $\partial_i: E_i^{p, q} \to E_i^{p+i, q + 1 - i}$
on such $x$ is represented  by the following element of $E_1^{p+i, q + 1 - i}$:
$$
d_i(x) + d_{i-1}(x_2) + \ldots + d_2(x_{i-1}).
$$
\end{prop}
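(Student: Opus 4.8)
The plan is to take the conclusion of the discussion preceding the statement at face value and simply unwind the textbook construction of the spectral sequence of a filtered complex (as in \cite{GM}), applied to $(L,\widehat{d}_L)$ with $\widehat{d}_L=\sum_{i\ge 1}d_i$ and the decreasing filtration $F^pL=\bigoplus_{p'\ge p}L^{p',\bullet}$. The first thing to record is that this is legitimate: the Basic Perturbation Lemma \ref{BPL}, together with the fact (noted above, cf. \cite{RRS}) that the homotopy $h$ preserves the column filtration, identifies the spectral sequence of the original bicomplex from the $E_1$ page onward with that of $(L,\widehat{d}_L)$. Because each $d_i$ raises the filtration degree by exactly $i\ge 1$, one has $\widehat{d}_L(F^pL)\subseteq F^{p+1}L$; hence $E_0^{p,q}=F^pL/F^{p+1}L$ is canonically $L^{p,q}$ with vanishing $d_0$, so $E_1^{p,q}\cong L^{p,q}$ and the induced differential on $E_1$ is exactly the operator $d_1=f\delta g$ of (\ref{dee}). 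In the first-quadrant situation the sum $\sum_i d_i$ is finite on each homogeneous element, so $\widehat{d}_L$ is well defined, the filtration is bounded, and the spectral sequence converges.

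Next I would spell out, in bidegree $(p,q)$, the classical description of $E_i^{p,q}$ as the subquotient $Z_i^p/(Z_{i-1}^{p+1}+\widehat{d}_L Z_{i-1}^{p-i+1})$ of $F^pL$, where $Z_r^p=F^pL\cap\widehat{d}_L^{-1}(F^{p+r}L)$, and translate each ingredient into the $d_j$. A general element of $F^pL$ of total degree $p+q$ is a finite sum $x+x_2+x_3+\cdots$ with $x\in L^{p,q}$ and $x_m\in L^{p+m-1,\,q-m+1}$; it lies in $Z_i^p$ precisely when, for $k=1,\dots,i-1$, the $L^{p+k,\bullet}$-component of $\widehat{d}_L(x+x_2+\cdots)=\sum_j d_j(x+x_2+\cdots)$ vanishes. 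Collecting terms shows that this component equals $d_k(x)+d_{k-1}(x_2)+\cdots+d_1(x_k)$, so the vanishing conditions are exactly the system (\ref{system}); moreover, modulo $F^{p+1}L$ only the truncation $x_2,\dots,x_{i-1}$ matters, which is why a completion is needed only up to $x_{i-1}$. The $L^{p+i,\bullet}$-component of the same expression is $d_i(x)+d_{i-1}(x_2)+\cdots+d_2(x_{i-1})$, and by definition this represents $\partial_i$ of the class of $x$; that it is again a partial cycle of the same type, so that it defines a class in $E_i^{p+i,q-i+1}$, follows from $\widehat{d}_L^2=0$.

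For the indeterminacy, the kernel of the natural map $Z_i^p\to L^{p,q}$ is $Z_{i-1}^{p+1}\subseteq F^{p+1}L$, which disappears once classes are represented by honest elements of $L^{p,q}$; so the only nontrivial part of the denominator is the image in $L^{p,q}$ of $\widehat{d}_L Z_{i-1}^{p-i+1}$. Writing $b=b_2+b_3+\cdots+b_i\in F^{p-i+1}L$ with $b_m\in L^{p-i+m-1,\bullet}$, the condition $b\in Z_{i-1}^{p-i+1}$, i.e. $\widehat{d}_L(b)\in F^pL$, is again the system (\ref{system}) with $x$ set to $0$ and $x_j$ replaced by $b_j$ (note that $b_i$ is then unconstrained), while the $L^{p,\bullet}$-component of $\widehat{d}_L(b)$ is $d_{i-1}(b_2)+d_{i-2}(b_3)+\cdots+d_1(b_i)$. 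This is precisely the ``modulo'' clause of the statement. It then remains to check that the answer is independent of the chosen completion $x_2,\dots,x_{i-1}$ and of the representative $x$ within its class; both verifications reduce to the observation that the difference of two completions (respectively, a boundary of the stated form) feeds into the formula for $\partial_i$ an element of exactly the boundary type just described.

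I expect no conceptual obstacle here: the entire content is the translation between the abstract subquotients $Z_r^p/(\cdots)$ and the concrete staircase (\ref{system}). The only real care needed is bookkeeping — tracking which auxiliary element $x_m$ or $b_m$ lives in which graded piece $L^{p+m-1,\bullet}$ or $L^{p-i+m-1,\bullet}$, and confirming that passing to representatives in $L^{p,q}$ genuinely absorbs the $Z_{i-1}^{p+1}$ summand of the denominator, so that what survives is the single clause stated in the Proposition.
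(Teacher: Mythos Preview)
Your proposal is correct and follows exactly the route the paper indicates: the paper's entire argument is the sentence ``Writing out the standard definitions for the spectral sequence of the filtered complex $(L,fXg)$ we get the following result,'' together with the citation of \cite{RRS} for the identification of the two spectral sequences, and you have carried out precisely that unwinding. Your bookkeeping of the $Z_r^p$-subquotients, the translation into the staircase system~(\ref{system}), and the identification of the indeterminacy are all accurate; in particular you correctly observe that the constraints fall on $b_2,\dots,b_{i-1}$ while $b_i$ is free (the paper's ``$(b_2,\dots,b_{i-2})$'' appears to be a typo).
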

\begin{corr}
Let $i \geq 2$ and suppose that $x \in E_1^{p, q}$ is such that
$d_1(x) = d_2(x) = \ldots = d_{i-1}(x) = 0$. Then such $x$ represents
a class in $E^{p, q}_i$ (as we may simply
take $x_i = 0$ for $i \geq 2$) and $\partial_i(x)$ is represented
by $d_i(x) \in E_1^{p+i, q-i+1}$.
\end{corr}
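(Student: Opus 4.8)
The plan is to derive the statement directly from Proposition \ref{standard}, checking that the hypothesis $d_1(x) = d_2(x) = \dots = d_{i-1}(x) = 0$ makes the system (\ref{system}) trivially solvable and then reading off the value of the differential. First I would observe that the first equation of the system, $d_1(x) = 0$, is part of the hypothesis, so $x$ indeed lies in the kernel defining the $E_1$-level representatives. Next, I would propose the explicit solution $x_2 = x_3 = \dots = x_{i-1} = 0$ and verify that it satisfies every remaining equation of (\ref{system}): the $k$-th equation reads $d_{k}(x) + d_{k-1}(x_2) + \dots + d_1(x_k) = 0$, and with all $x_j = 0$ this collapses to $d_k(x) = 0$, which holds for $2 \le k \le i-1$ by assumption. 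Hence $x$ represents a well-defined class in $E_i^{p,q}$.

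For the value of $\partial_i$, I would simply substitute $x_2 = \dots = x_{i-1} = 0$ into the formula
$$
d_i(x) + d_{i-1}(x_2) + \dots + d_2(x_{i-1})
$$
from Proposition \ref{standard}, so that all terms but the first vanish and $\partial_i(x)$ is represented by $d_i(x) \in E_1^{p+i, q-i+1}$, as claimed. The bidegree is consistent since each $d_i$ has bidegree $(i, 1-i)$ by (\ref{dee}).

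The only point requiring a word of care — and the closest thing to an obstacle — is the well-definedness of the class, i.e. independence of the choice of the auxiliary elements $x_j$. Proposition \ref{standard} already records that the class in $E_i^{p,q}$ is taken modulo expressions $d_{i-1}(b_2) + \dots + d_1(b_i)$ with the $b_j$ constrained by the $x=0$ version of (\ref{system}); since changing the $x_j$ alters the representative by precisely such an expression, the class of $d_i(x) + \dots$ in $E_1^{p+i,q-i+1}$ is unaffected. Thus choosing the $x_j$ to be zero is legitimate, and no separate argument beyond quoting Proposition \ref{standard} is needed. This makes the corollary essentially a specialization of the proposition, so I expect the proof to be short.
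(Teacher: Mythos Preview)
Your proposal is correct and follows exactly the approach indicated in the paper: the parenthetical remark ``(as we may simply take $x_i = 0$ for $i \geq 2$)'' in the statement of the corollary is the entire proof, and you have spelled it out in full. No separate argument is given in the paper beyond this hint, so your write-up is just a careful unpacking of what Proposition \ref{standard} already says.
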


\subsection{A-infinity structures on cohomology and Massey products.}

Let $\mathcal{A}$ be a DG algebra, $A$ its cohomology algebra,
and assume there is a reduction
$(f, g, h)$ of $\mathcal{A}$  to $A$ as before.
Note that in general it may not be possible to choose either $f$ or $g$
multiplicative (e.g. if the derived categories of $\mathcal{A}$ and $A$
are not equivalent). In fact, $A$ admits a system of higher products
$m_i: A^{\otimes k} \to A$ which,  in a sense, measure how far the two
algebras are from being quasi-isomorphic as DG algebras, cf.
\cite{Ka}.

We recall this construction from the perturbation theory viewpoint.
First consider $\mathcal{A}$ and $A$ as non-unital algebras with the zero
product and consider their bar constructions $B(\mathcal{A})
= \displaystyle{\bigoplus_{k \geq 0} \mathcal{A}^{\otimes k}}$, and similarly
for $B(A)$. The differential on $\mathcal{A}$ extends by the Leibnitz rule
to $B(\mathcal{A})$ and the original contraction $(f, g, h)$ extends to
a contraction from $B(\mathcal{A})$ to $B(A)$. The extension for $f$ and
$g$  is obvious, and $h$ is given in $A^{\otimes n}$ by
$$ \sum_{i =1}^{n} (gf)^{\otimes (i-1)} \otimes h \otimes 1^{\otimes (n-i)}.$$
If we now recall the non-trivial product on $\mathcal{A}$, this will give
a perturbation $d + \delta$ of the initial differential $d$ on $K = B(\mathcal{A})$.
Hence by Perturbation Lemma \ref{BPL} we can write a new non-zero differential
on $L= B(A)$ such that the two bar constructions are still homotopic.
One can check that the new differential agrees with the
natural coproduct on $B(A)$ and it is therefore encoded by a
series of maps $m_k: A^{\otimes k} \to A$. One further checks that
$m_1=0$  and $m_2$ is the standard product $f(g(a) g(b))$.

Explicitly, one can define $m_n: A^{\otimes n} \to A$ for $n \geq 2$ using the
operations $\lambda_n: \mathcal{A}^{\otimes n} \to \mathcal{A}$
for $n \geq 2$ by setting $\lambda_2(a_1 \otimes a_2) = a_1 a_2$
and
\begin{equation}
\label{merkulov}
\lambda_n = \sum_{p=1}^{n-1} (-1)^{p+1} \lambda_2[h\lambda_p
\otimes h\lambda_{n-p}],
\end{equation}
where in the terms with $p = 1$ and  $n-1$ we formally set
$h \lambda_1 = - id_{\mathcal{A}}$. Then
\begin{equation}
\label{ainfty}
f \circ \lambda_n \circ g^{\otimes n}: A^{\otimes n} \to A
\end{equation}
for $n \geq 2$ gives an $A_\infty$-structure $\{m_n\}_{n \geq 2}$.
See \cite{JL}, \cite{LPWZ}, \cite{CG} and references therein.

The higher products $m_k$ for $k \geq 3$ in general depend on the choice of
$h$. However, for special choices of $a_1, \ldots, a_k$ the value
$m_k(a_1, \ldots, a_k)$ belongs to the coset of a Massey product,
see Theorem 3.1 in \cite{LPWZ}, see also Theorems 6.3 and 6.4 in
\cite{JL}, hence at least this coset is independent
of $h$ for this particular choice of the arguments.

\section{Higher differentials and Massey products.}

As in the previous subsection, any reduction $(f, g, h)$ of
$\mathcal{A}$ to $A$ induces a reduction of the graph cohomology
complex $C_G(\mathcal{A})$ onto $C_G(A)$. We will show that the
operators $d_i$ defined by (\ref{dee}) are completely determined
by the $A_\infty$-structure on $A$, induced by $(f, g, h)$. In view
of Proposition \ref{standard},
this gives information about the
higher differentials of the spectral sequence. In addition, specific
values of the $A_\infty$ operations are given by Massey products,
see below, confirming the conjecture by Bendersky and Gitler
(formulated originally for the complete graph $G$, i.e. the
usual configuration space).

For other situations in which differentials of a spectral sequence
are related to the (matric) Massey products see  Theorem 12.1 in \cite{S},
Corollary 4.6 in \cite{Ma} or \cite{La}.

\subsection{Computation of the $d_i$ operators.}

We want to compute the values of $d_i: E_1^{p, q} \to
E_1^{p+i, q + 1 - i}$. Write
$$
d_i(e_s \cdot a_1 \otimes \ldots \otimes a_{l(s)})
= \sum_{t| s \subset t} e_t \cdot C_i^{s \subset t}
(a_1 \otimes \ldots \otimes a_{l(s)})
$$
\begin{prop}
The value of $C_i^{s \subset t} (a_1 \otimes \ldots \otimes a_{l(s)})$ is zero
unless the set of edges $t \setminus s$ has $i$ elements and projects to
a tree with $i$ edges in the graph $G/s$ obtained by contracting
all edges in $s$. In the latter case, suppose that the edges of $t \setminus s$
connect the components $1 \leq \alpha_1 < \alpha_2 < \ldots < \alpha_{i+1} \leq l(s)$
in the graph $[G:s]$, then
\begin{eqnarray*}
\lefteqn{ C_i^{s \subset t} (a_1 \otimes \ldots \otimes a_{l(s)})=
(-1)^\varepsilon a_1 \otimes \ldots \otimes a_{\alpha_1 - 1} \otimes}\\
&&\hspace{-1cm}
\otimes C_i^{\emptyset \subset (t \setminus s)}
(a_{\alpha_1} \otimes \ldots \otimes a_{\alpha_{i+1}})
\otimes \ldots \widehat{a_{\alpha_2}} \ldots \widehat{a_{\alpha_{i+1}}}
\otimes \ldots \otimes a_{l(s)},\end{eqnarray*}
\noindent and $(-1)^\varepsilon$ is the sign of the permutatation
$$
(a_1, \ldots, a_{l(s)}) \mapsto
(a_1, \ldots, a_{\alpha_1 - 1}, a_{\alpha_1},
a_{\alpha_2}, \ldots, a_{\alpha_{i+1}}, \ldots
\widehat{a_{\alpha_2}} \ldots \widehat{a_{\alpha_{i+1}}}, \ldots,
a_{l(s)}).
$$
\end{prop}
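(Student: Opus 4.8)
The plan is to compute the operators $d_i=(-1)^{i-1}f(\delta h)^{i-1}\delta g$ of (\ref{dee}) directly, using the explicit form of the induced reduction of $C_G(\mathcal A)$ onto $C_G(A)$ (throughout I take $i\ge 2$; for $i=1$ the operator $d_1$ is the differential $\delta$ of $C_G(A)$ itself, which does receive contributions from loops in $G/s$). The first point is that this reduction is \emph{columnwise}: on each summand $e_s\cdot\mathcal A^{\otimes l(s)}$ it is the $l(s)$-fold tensor power of the reduction $(f,g,h)$ of $\mathcal A$ onto $A$, with homotopy the telescope $\sum_{k=1}^{l(s)}(gf)^{\otimes(k-1)}\otimes h\otimes 1^{\otimes(l(s)-k)}$, exactly as for the bar construction in Section 3.3. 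Tracing $f(\delta h)^{i-1}\delta g$ through, starting from $e_s\cdot(a_1\otimes\dots\otimes a_{l(s)})$, every intermediate term is then a tensor whose factors are built from the leaves $g(a_j)$ by the multiplication $\lambda_2$ of $\mathcal A$ with occasional applications of $h$ — precisely the ingredients of Merkulov's formula (\ref{merkulov}) — while each application of $\delta$ either wedges in a new $e_\alpha$ without touching the tensor factors (a \emph{within-component} edge) or multiplies two factors and wedges in $e_\alpha$ (a \emph{merging} edge). Since $\delta$ strictly enlarges the edge subset by one edge while $f,g,h$ preserve it, $C_i^{s\subset t}=0$ unless $s\subset t$ and $|t\setminus s|=i$, and $C_i^{s\subset t}$ depends only on the restriction of all data to the interval sub-bicomplex $\bigoplus_{s\subseteq s'\subseteq t}e_{s'}\cdot\mathcal A^{\otimes l(s')}$.

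Next I would obtain the locality statement by decomposing this interval sub-bicomplex. For $s\subseteq s'\subseteq t$ the connected components of $[G:s']$ split into those meeting $t\setminus s$ (merged according to $s'\setminus s$) and those that do not, which no $\delta$ ever touches; as $f,g,h,\delta$ all respect this splitting, the interval sub-bicomplex is isomorphic, up to a Koszul sign from reordering edges in $\Lambda$ and reshuffling tensor factors, to $C_H(\mathcal A)\otimes\mathcal A^{\otimes m}$, where $m$ is the number of untouched components and $H$ is the graph on the components $\alpha_1<\dots<\alpha_{i+1}$ meeting $t\setminus s$ with edge set $t\setminus s$. This gives $C_i^{s\subset t}=\pm\,C_i^{\emptyset\subset(t\setminus s)}(a_{\alpha_1}\otimes\dots\otimes a_{\alpha_{i+1}})\otimes\mathrm{id}$, the merged factor occupying the position $\alpha_1$; matching up the signs carefully produces the permutation sign $(-1)^{\varepsilon}$ of the statement. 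This sign computation, routine but fiddly, is one place requiring care.

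The crux is to show $C_i^{\emptyset\subset E(H)}$ vanishes unless $H$ is a tree. Here one exploits the side conditions $hg=fh=hh=0$ (hence $gfh=0$) together with the fact that in $(\delta h)^{i-1}\delta$ consecutive $\delta$'s are separated by exactly one $h$, so that after $g$ and before the final $f$ the operations run $\delta,h,\delta,h,\dots,\delta$. The inductive invariant is that after each $h$ every tensor factor is either a single leaf $g(a_j)$ or \emph{closed} (of the form $h(\cdots)$, outermost operation $h$), whereas immediately after a merging $\delta$ exactly one factor is \emph{open} (outermost operation $\lambda_2$). Granting this: (a) a within-component edge changes no tensor factor, so the following $h$ — or, at the last step, the final $f$ — annihilates the term, since it meets only leaves ($hg=0$) and closed factors ($hh=0$, $fh=0$); hence every $\delta$ is a merge and $t\setminus s$ spans a forest in $G/s$. (b) If that forest were disconnected, the last-added edge would lie in one of its components, so some other component $C'$ would be completed earlier; $C'$'s factor, open right after $C'$ is completed and closed after the next $h$, is never re-opened (it is never merged with another component), so the final $f$ kills it through $fh=0$ and the term dies. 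Hence $H$ is connected, i.e.\ a tree with $i$ edges on the $i+1$ vertices $\alpha_1,\dots,\alpha_{i+1}$. The main obstacle is pushing this induction through cleanly, in particular establishing the ``exactly one open factor'' claim in the presence of the $gf$-prefactors of the telescope homotopy, and noting that in the tree case the growing blob is the only non-leaf factor, so the positional constraint these prefactors impose (the open factor must lie to the left of every closed one, or the term vanishes) is automatically satisfied — the same analysis, continued, then expresses the surviving sum over edge-orderings and insertion points as Merkulov's $\lambda_i$, which sets up the identification of $C_i^{\emptyset\subset E(H)}$ with the $A_\infty$-operation and its Massey-product values in the next result.
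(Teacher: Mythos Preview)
Your proposal is correct and follows essentially the same route as the paper: both arguments analyse $f(\delta h)^{i-1}\delta g$ directly on a tensor monomial and use the side conditions $hg=fh=hh=0$ to show that only a single ``complex'' tensor factor can arise, forcing $t\setminus s$ to be a tree in $G/s$.

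The main difference is one of style. The paper gives a global count: in any surviving factor built from $p$ multiplications the number of $h$'s is $\le p-1$ (each $h$ sits over a $\lambda_2$ by $hg=hh=0$, and the root cannot carry an $h$ by $fh=0$); summing over factors against the totals $\#\lambda_2\le i$, $\#h=i-1$ forces exactly one nontrivial factor and $m=i$ merges. You instead maintain the step-by-step invariant ``after each $h$, every factor is a leaf or closed; after each merging $\delta$, exactly one factor is open'', and then argue (a) within-component edges die at the next $h$ or final $f$, and (b) disconnected forests leave a permanently closed factor that $f$ kills. Both are sound; the paper's count is shorter, while your inductive bookkeeping has the virtue of already isolating the positional constraint from the $(gf)$-prefactors in the telescope homotopy --- exactly the ingredient needed for the next proposition, where one must identify the surviving edge-orderings with the permutations in $\Sigma(G)$ and match the resulting expression with Merkulov's $\lambda_n$. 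For the reduction formula itself, the paper simply invokes the contraction isomorphism $e_s\cdot C_G(A)\simeq C_{G/s}(A)$, which is your interval-bicomplex decomposition phrased more tersely.

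One small point you handle more carefully than the paper: you correctly note that the statement as written does not cover $i=1$, since $d_1=\delta$ genuinely has within-component (``loop in $G/s$'') contributions; restricting to $i\ge 2$ is the right move.
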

\noindent
\begin{proof} The side conditions $hg=hh=0$ imply that when we evaluate $\delta
(h\delta)^{i-1}$ on a tensor monomial, all occurrences of $h$
should be applied only to the newly created products involved in the
definition of $\delta$. Also, the last occurrence of $h$ should be multiplied
by something else before we apply $f$ to it (since $fh = 0$).
Therefore, every tensor factor standing next to $e_t$ in
$f^{\otimes (l(s) - i)}
\delta (h\delta)^{i-1} g^{\otimes l(s)}(e_s \cdot a_1 \otimes \ldots a_{l(s)})$,
is either one of the original $a_i$,
or an expression involving $p$ multiplications and $\leq p-1$
uses of $h$. Since the total number of multiplications is $i$ and $h$
occurs $(i-1)$ times, exactly one of those factors can have the latter
form. This means that $i$ components of $[G:s]$ assemble into
a single component of $[G:t]$ and other components remain untouched,
as claimed. The rest follows from the contraction isomorphism of
complexes $e_s \cdot C_G(A) \simeq C_{G/s}(A)$.
\end{proof}

\medskip
\noindent
The previous proposition means that, in computing operators
$d_i$ (involved in the formulas for the spectral sequence
differentials $\partial_i$) we can reduce
to the case when $s = \emptyset$, $t = E(G)$,  and $G$ is a connected
tree with $i = n-1$ edges.
\begin{prop}
\label{massey}
Under the above assumptions
$$
d_{n-1} (a_1 \otimes \ldots \otimes a_n) = (-1)^{\frac{n(n-1)}{2}}
\sum_{\sigma \in \Sigma(G)} (-1)^\sigma e_t \cdot m_n \circ \sigma
$$
where $m_n: A^{\otimes n} \to A$ is the $n$-th product of the
$A_\infty$-structure on $A$ induced by the reduction of $\mathcal{A}$ onto
$A$ as in (\ref{ainfty}); and the sum runs over the set $\Sigma(G)$
of all permutations of $\{1, \ldots, n\}$ such that
the total order in which $\sigma(1) < \sigma(2) < \ldots < \sigma(n)$
refines the partial order generated by $i < j$ whenever
there is an edge $i \to j$ in the graph $G$.
\end{prop}
\noindent
Note that the action of  $\sigma: A^{\otimes n} \to A^{\otimes n}$
involves an appropriate Koszul sign.
\begin{figure}\begin{center}
  \scalebox{0.8}{\includegraphics{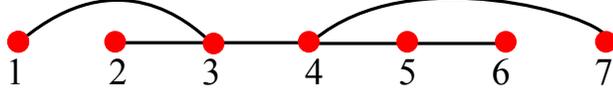}}
 \caption{An example for $n=7$.}
\end{center}
\label{Linear1223}
\end{figure}
\begin{remark} See Theorem 12.1 on page 60 of \cite{S} for a similar
(and perhaps related) situation when a spectral sequence differential
is related to an $A_\infty$ structure. See also \cite{FT} for a special
case (with $n=3$) of the above result.
\end{remark}
\begin{example}
In the example shown on Figure 1.  the possible
permutations $\sigma$ are given by
$$
(1234567), (2134567), (1234576), (2134576), (1234756), (2134756).
$$
\end{example}

\medskip
\noindent
\begin{proof} We use induction on $n$. For $n=2$ the graph $G$ consists of
a single arrow and the assertion easily follows from the definition of $d_1$.
For general $n$ let us consider the last edge $\alpha$ to disappear when
we apply the leftmost term in the expression $\delta (h\delta)^{n-2}$ to
the element $g^{\otimes n} (a_1 \otimes \ldots \otimes a_n)$. When
we remove $\alpha: i \to j$ the graph $G$ splits into a disjoint union of two trees.
We have three cases. First, one of this trees may consist of a single vertex
$i$. Denoting the other tree by $G_1$ we encode this case
by the diagram $i \to G_1$. Next, both trees may contain at least two vertices.
Denoting these trees by $G_2$ and $G_3$ we will use the notation
$G_2 \to G_3$. Finally, if one of the trees is the single vertex $j$
and the other is denoted by $G_4$ we encode this situation by $G_4 \to j$.

Denote by $\lambda_G$ the expression $\delta (h\delta)^{n-2}$.
We would like to show that $\lambda_G$ is equal to the alternating sum of
$\lambda_n \circ \sigma$, $\sigma \in \Sigma_n$, modulo the image of
$h$ (which does not affect the value of $d_i$ due to $fh =  0$).
It is clear
that looking at ``the last edge to be used with $\delta$" we get an inductive formula
\begin{equation}
\label{induct}
\lambda_G \equiv \sum_{i \to G_1} \pm \lambda_2[h\lambda_1 \otimes h \lambda_{G_1}]
+ \sum_{G_2 \to G_3} \pm \lambda_2[h\lambda_{G_2} \otimes h\lambda_{G_3}]
+ \sum_{G_4 \to j} \pm \lambda_2[h\lambda_{G_4} \otimes h \lambda_1]
\end{equation}
modulo terms in the image of $h$.
We would like to establish the inductive step by applying the
antisymmetrization in $\sigma$ to the formula (\ref{merkulov})
and comparing the result with the above recursive formula.
The first terms matches the $p=1$ term in the antisymmetrization of
(\ref{merkulov}) since the vertices $i$ which can occur in
$i \to G_1$ are exactly the vertices which can occur as $\sigma(1)$, and
for the second factor we can apply the inductive assumption to $G_1$.
Similarly the third term above matches the $s = p-1$ term in the
antisymmetrization of (\ref{merkulov}) since the possible values of $\sigma(n)$
are exactly the vertices $j$ which have a single edge coming into it.

Hence it remains to compare the second term above and the terms corresponding
to $2 \leq p \leq n-2$ in the formula (\ref{merkulov}).
For the latter terms, consider $\sigma \in \Sigma_G$, then we want to
understand $h\lambda_p(a_{\sigma(1)} \otimes \ldots a_{\sigma(p)})
\otimes h\lambda_{n-p}(a_{\sigma(p+1)} \otimes \ldots \otimes a_{\sigma(n)})$
This appears in the middle term of (\ref{induct}) precisely when
both subsets of vertices $\sigma(1), \ldots, \sigma(p)$
and $\sigma(p+1), \ldots, \sigma(n)$ give connected subgraphs
$G_2$ and $G_3$, respectively.

We would like to show that all other terms sum to zero. We group together
those terms that give fixed $G_2$ and $G_3$ and assume that $G_2$
has $q \geq 2$ connected components $J_1, \ldots, J_q$. Observe that the total
order induced by $\sigma$ is in this case simply a concatenation of
total orders on $G_2$ and $G_3$, and the total order on $G_2$ must
refine the partial order induced by the edges of $G$, i.e. it is simply a
shuffle of total orders on $J_1, \ldots, J_q$.
Hence in the antisymmetrization
the operator $h \lambda_s$ is applied to
$$
\pm \big[\sum_{\sigma_1 \in \Sigma_{J_1}} (-1)^{\sigma_1} \sigma_1(a_{J_1}) \big] \# \ldots \#
\big[\sum_{\sigma_q \in \Sigma_{J_q}} (-1)^{\sigma_1} \sigma_q(a_{J_q})\big]
$$
where $\#$ stands for the shuffle product on the tensors and
$a_{J_i}$ is the ordered tensor product of elements in $J_i$.
Since $\mathcal{A}$ is graded commutative, the operations
$h\lambda_s$ vanish when applied to shuffle products by Theorem 12 in \cite{CG}
(in fact, we use the vanishing on the shuffles of the higher components of the
$A_\infty$ map $A \to \mathcal{A}$). Hence the terms of
the antisymmetrization of (\ref{merkulov}) which do not show up in
(\ref{induct}), sum up to zero, as required.
\end{proof}

\begin{figure}
\begin{center}
  \scalebox{0.8}{\includegraphics{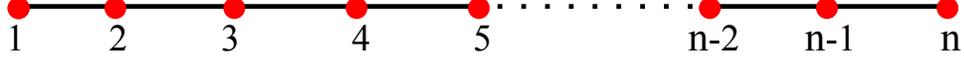}}
 \caption{Linear graph $G$.}
\end{center}
\label{Linear}
\end{figure}

\begin{example}
  Suppose that $G$ is a linear graph as in Figure 2. Suppose also that
$a_1, \ldots, a_n \in A$ are such that $m_{i} (a_p, \ldots, a_{p+1-i}) = 0$
for all $i \geq 2$ and $1 \leq p \leq k-i+1$. Then by corollary to
Proposition \ref{standard} and Proposition \ref{massey} the product
$a_1 \otimes \ldots \otimes a_n$
represents a class in $E_k$ and $\partial_{n-1}(a_1 \otimes \ldots \otimes a_n)$
is represented, up to sign,
by $m_n(a_1, \ldots, a_n)$. Observe that by
\cite{LPWZ} and \cite{JL} under the same assumptions the $n$-fold
Massey product of $a_1, \ldots, a_n$ is well defined, and represented
(as a coset),
up to a sign, by the value $m_n(a_1, \ldots, a_n)$. Thus, considering
sub-paths in a complete graph on $n$ vertices we give a concrete
formulation (and proof) of the conjecture at the bottom of page 429
of \cite{BG} that ``the higher differentials ... are determined by
higher-order Massey products". The $n=3$ case of this observation
was proved earlier in \cite{FT}.

\end{example}

\subsection{Degeneration}

\begin{prop}
\label{degeneration}
Assume that for a choice of homotopy $h$ all higher $A_\infty$
 products
vanish $\mu_{i} = 0$, $i \geq 3$ (e.g. $M$ is Kahler). Then the
spectral sequence degenerates at the $E_2$ term: $\partial_t = 0$
for $t \geq 2$. In general, if $k \leq n-1$ is the maximal length
of a sub-tree in $G$, then the spectral sequence
degenerates at the $E_{k+1}$ term: $\partial_t =0$ for $t \geq k+1$.
\end{prop}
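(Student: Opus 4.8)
The plan is to reduce everything to the single claim that the operators $d_i$ of (\ref{dee}) vanish for $i$ larger than the relevant bound, and then to feed this into Proposition \ref{standard}.

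First I would prove that $d_i = 0$ whenever $i > k$, where $k$ is the largest number of edges in a sub-tree of $G$. By the first Proposition of this subsection, the component $C_i^{s \subset t}$ is zero unless $t \setminus s$ has exactly $i$ edges whose image in $G/s$ is a tree with $i$ edges. Such a configuration forces a sub-tree of $G$ with at least $i$ edges: the $i$ tree-edges join $i+1$ of the connected components of $[G:s]$, so lifting each of them to an edge of $G$ and adjoining all edges of $G$ lying inside those components yields a connected subgraph of $G$ on at least $i+1$ vertices, any spanning tree of which is a sub-tree of $G$ with at least $i$ edges. Hence for $i > k$ every $C_i^{s \subset t}$ vanishes and so $d_i = 0$. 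Under the additional hypothesis $\mu_j = 0$ for all $j \geq 3$ (the K\"ahler case) I would instead argue: by the reduction recorded after the first Proposition, together with the contraction isomorphism $e_s \cdot C_G(A) \simeq C_{G/s}(A)$, every contribution to $d_i$ is governed by $d_i$ on a standalone sub-tree with $i$ edges, and Proposition \ref{massey} identifies that operator — up to sign and antisymmetrization — with $m_{i+1}$. Since $m_{i+1} = \mu_{i+1} = 0$ as soon as $i + 1 \geq 3$, this gives $d_i = 0$ for all $i \geq 2$; so the K\"ahler case is the case $k = 1$ of the general bound.

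Next I would conclude using Proposition \ref{standard}. Once $d_i = 0$ for $i \geq k+1$, the perturbed differential $\widehat{d}_L = d_1 + d_2 + \cdots$ on $L = C_G(A)$ is the finite sum $d_1 + \cdots + d_k$, i.e. a differential on the split filtered complex $L$ whose components have filtration degrees $1, \dots, k$ only. For $r \geq k+1$ and a class of $E_r$ represented by $x$ together with auxiliaries $x_2, \dots, x_{r-1}$ solving system (\ref{system}), the value $\partial_r(x) = d_r(x) + d_{r-1}(x_2) + \cdots + d_2(x_{r-1})$ has $d_r(x) = 0$; moreover $x + x_2 + \cdots + x_{r-1}$ lies in the cocycle group $Z_r$ of the filtered complex, its image under $\widehat{d}_L$ agrees with $d_{r-1}(x_2) + \cdots + d_2(x_{r-1})$ modulo strictly higher filtration — precisely because the equations of (\ref{system}) cancel the intermediate terms — and a short computation then shows that this represents $0$ in $E_r$. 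Hence $\partial_r = 0$ for all $r \geq k+1$, so the spectral sequence degenerates at $E_{k+1}$, and at $E_2$ in the K\"ahler case.

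The main obstacle is this last implication: passing from ``$d_i = 0$ for $i > k$'' to ``$\partial_r = 0$ for $r > k$''. Although this is the expected principle that a spectral sequence whose total differential has finite length collapses accordingly, the delicate point is to verify that the correction terms $d_{r-1}(x_2) + \cdots + d_2(x_{r-1})$ in the formula for $\partial_r$ are exactly accounted for by the defining relations of the $x_j$, so that what survives is a genuine coboundary on the page $E_r$ rather than merely a coboundary in the total complex. Everything preceding that step — the combinatorial lifting argument establishing $d_i = 0$, and the identification of the surviving tree-contribution with $m_{i+1}$ via Proposition \ref{massey} — is routine given the results already in hand.
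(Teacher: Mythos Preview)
Your treatment of the K\"ahler case is correct and in fact more direct than the paper's. The paper argues by formality: the vanishing of $m_i$ for $i\ge 3$ makes $\Omega(B(A))\to A$ a DG-algebra quasi-isomorphism, so $\mathcal{A}$ and $A$ are linked by DG quasi-isomorphisms and the spectral sequence of $C_A(G)$ (with zero internal differential) visibly degenerates at $E_2$. You instead use Proposition~\ref{massey} to get $d_i=0$ for all $i\ge 2$ and then observe that $\widehat{d}_L=d_1$, which forces degeneration at $E_2$. Both are fine; yours stays closer to the perturbation formulas already developed.

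For the general bound, however, your argument has a genuine gap. The ``expected principle'' that $d_i=0$ for $i>k$ forces $\partial_r=0$ for $r>k$ is \emph{false} for filtered complexes in general. A minimal counterexample with $k=2$: let $L$ have basis $e_0,e_1,e_2,e_3$ in filtration degrees $0,1,2,3$, with $d_1(e_1)=e_2$, $d_2(e_0)=e_2$, $d_2(e_1)=e_3$, all other $d_i$ zero. Then $\widehat{d}_L=d_1+d_2$ squares to zero, $E_2=E_3=\langle e_0,e_3\rangle$, and solving $d_2(e_0)+d_1(x_2)=0$ gives $x_2=-e_1$, so
\[
\partial_3(e_0)=d_3(e_0)+d_2(x_2)=-e_3\neq 0\quad\text{in }E_3.
\]
Thus the ``short computation'' you allude to cannot succeed without using more structure. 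The terms $d_{r-1}(x_2)+\cdots+d_2(x_{r-1})$ are \emph{not} automatically coboundaries on $E_r$; the equations (\ref{system}) only kill the components in filtration degrees $p+1,\ldots,p+r-1$, not the one in degree $p+r$.

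What the paper actually uses is a refinement specific to the graph complex: one can choose the auxiliaries $x_j$ to be supported on summands $e_s\cdot A^{\otimes l(s)}$ with $s$ containing a subtree of length $j-1$, and then the tree description of $C_i^{s\subset t}$ gives $d_i(x_j)=0$ whenever $i+j-1>k$, not merely when $i>k$. Since every term $d_i(x_j)$ in the expression for $\partial_t(x)$ has $i+j-1=t>k$, each term vanishes separately. Your lifting argument for $d_i=0$ when $i>k$ is correct and is the $j=1$ case of this; the missing idea is to propagate the subtree constraint to the $x_j$.
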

\begin{proof} For the first part, by homological perturbation theory,
 there is
an $A_\infty$-map $A \to \mathcal{A}$ which induces a quasi-isomorphism
of $A_\infty$-algebras. This can be encoded by a single $R$-linear map
$B(A) \to \mathcal{A}$ such that the canonical multiplicative extension $\Omega(B(A))
\to \mathcal{A}$ is a quasi-isomorphism of $DG$-algebras,
where
$\Omega$ and $B$ are the cobar and bar constructions, respectively.
But since the higher products vanish, the natural map
$\Omega(B(A)) \to A$ is also a quasi-isomorphism of DG-algebras.
Since the differential on
$A$ is zero, the spectral sequence of
$C_A(G)$ degenerates at the $E_2$ term.

For the second part, assume that $x \in E_1^{p, q}$ represents a
class in $E_k^{p, q}$ and let us show that $\partial_t (x) = 0$
for $t > k$.
We can assume that $x$ is a linear combination of
elements in $e_s \cdot A^{\otimes l(s)}$ with $|s| = p$ and fixed
$l(s) = l$. Since $x$ represents an element in $E_k$, the system
of equations (\ref{system}) on $x_ \in E_1^{p+j-1, q+1-j}$
with $j =2, \ldots, k-1$, admits a solution. From our results
on the operators $d_i$ in the previous subsection, we can assume that
$x_j$ is a linear combination of terms $e_s \cdot A^{\otimes l(s)}$
where $s$ contains a subtree of length $j-1$. By the same result
$d_i(x_j) = 0$ if $i+j -1 > k$. Therefore $\partial_t(x) = 0$
for $t > k$. \end{proof}

\begin{remark}
When $G$ is a complete graph on $n$ vertices, the maximal subtree
length is $k=n-1$. Our result  $\partial_t =0$ for $t \geq n$ is
a little weaker than Proposition 4.2 in \cite{FT} which asserts that
$\partial_{n-1} = 0$ as well.
\end{remark}

\noindent
Addresses:

\medskip
\noindent
VB: Department of Mathematics, 340 Rowland Hall, UC Irvine, Irvine CA, 92617;
email: vbaranov@uci.edu; fax: +1-949-824-7993

\medskip
\noindent
RS: Department of Mathematics, University of Pennsylvania, 209 South 33rd Street,
Philadelphia PA, 19104-6395;
email: radmilas@math.upenn.edu

\end{document}